\documentclass[12pt]{article}
\usepackage{amsmath,amssymb,times,amsthm}
\usepackage{enumerate}

\newtheorem{thm}{Theorem}[section]

\newtheoremstyle{theorem}
  {10pt}          
  {10pt}  
  {\normalfont}  
  {}     
  {\bf}  
  {. }    
  { }    
  {}     

\theoremstyle{theorem}
\newtheorem{dfn}{Definition}[section]
\newtheorem{rem}{Remark}[section]
\newtheorem{cor}{Corollary}[section]

\oddsidemargin 0.0cm
\topmargin -1.0cm
\textheight 22.0cm
\textwidth 16.0cm

\begin{document}
\noindent \textit{Journal of the Kerala Statistical Association}, Vol. 7,  No. 1, July 1991, p. 01-07.

\noindent * {\normalfont This paper won the Young Scientist's Award at the 77$^{\rm th}$ session of the Indian Science Congress Association held at Cochin, January 1990.}

\begin{center}
{\bf *On geometric infinite divisibility, p-thinning and Cox processes} \\
\vspace{.4cm}
Sandhya E\\
Department of Statistics, University of Kerala\\
Thiruvananthapuram - 695 581, India.\\ 
email: \textit{esandhya@hotmail.com}
\end{center}

\begin{abstract}
The connections among geometric infinite divisibility, 
$p$-thinning and Cox processes are established in this paper. 
Some results on subordination of stochastic processes with 
stationary independent increments, connected with geometric 
infinite divisibility are derived. A characterization of the 
renewal process with semi-Mittag-Leffler as inter arrival time 
distribution is obtained in the context of $p$-thinning, which 
is an extension of a result due to Renyi (1956). It is identified 
that only geometrically infinitely divisible distributions can 
define a Cox and renewal process. An analogue of a theorem 
in Feller (1966) is given.
\end{abstract}

\section{Introduction}
The concept of geometric infinite divisibility (g.i.d) was
introduced by Klebanov \textit{et al.} (1984). 
A random variable (\textit{r.v.}) $X$ is said to be g.i.d if for every 
$p\in (0,1)$ it can be expressed as 
$X\overset{d}{=}\sum\limits^{N_p}_{j=1}$ $X_j^{(p)}$, where 
$N_p$, $X_1^{(p)}$, $X_2^{(p)}\ldots$ are independent, the
$X_j^{(p)}$ are \textit{i.i.d.} and $P\{N_p=n\}=p(1-p)^{n-1}$, $n =
1,2,\ldots$. Equivalently in terms of characteristic functions, 
definition is the following. Let $g(t)=E(e^{itX})$, 
$\phi_p(t)=E\left(e^{it\;X_1^{(p)}}\right)$. If for all
$p\in (0,1)$, $g(t)=\frac{p\phi_p(t)}{1-q\phi_p(t)}$,
$q=1-p$, then $X$ is said to be g.i.d.
They proved that a\textit{r.v.} with characteristic function $g(t)$ 
is g.i.d if and only if $g(t)=\frac{1}{1+\psi(t)}$, where
$e^{-\psi(t)}$ is infinitely divisible (i.d).
Pillai and Sandhya (1990) studied the class of g.i.d
distributions more deeply.

The idea of $p$-thinning goes back to Renyi (1956). 
Let $0 = t_0<t_1<\ldots <t_n$ denote the random epochs 
corresponding to a renewal process. If we retain every 
epoch $t_n$, $n=1,2,\ldots$ with probability `$p$' and delete it 
with probability $q =(1-p)$, independent of every other points 
and the process itself, the resulting process is
called the $p$-thinned process of the original process. 
Renyi used the term `rarification' for $p$-thinning. The process 
obtained by replacing each epoch $t_n$, $n=1,2,\ldots$ by `$pt_n$',
$p\in (0,1)$ is celled the contraction of the original process. 
Renyi (1956) proved that Poisson process is the only one that is invariant 
under contraction and $p$-thinning applied together.

An ordinary renewal process with inter arrival time distribution 
$G$ is said to be a Cox process if there exists a process with inter
arrival time distribution $F_p$ such that the process corresponding to 
$G$ in the $p$-thinning of the process corresponding to $F_p$, 
for all $p\in (0,1)$. That is, the process corresponding to $F_p$ is the $p$-inverse of the process corresponding to $G$ for all 
$p\in (0,1)$. For relevant work see Yannaros (1988, 1989).

Pillai (1990) introduced the class of distributions called the Mittag-Leffler 
distributions which are defined by their Laplace transform 
given by $\frac{1}{1+\lambda^\alpha}$, $0<a\leq 1$.
Obviously, $a=1$ corresponds to the exponential distribution. 
This is also a subclass of the semi-$\alpha$-Laplace distribution 
introduced in Pillai (1985). The semi-Mittag-Leffler distributions which we will be defining in the sequel contains the Mittag-Leffler distributions and is 
contained in the semi-$\alpha$-Laplace distributions.

In section 1, we present some results regarding the subordination 
of infinitely divisible processes (Feller (1966, p.335) directed by gamma, exponential and Mittag-Leffler processes such that
the increments of the subordinated process are g.i.d. The
main result in the next section is a characterization of the
renewal process with semi-Mittag-Leffler as the inter arrival time
distribution in the context of $p$-thinning. In the third section 
it is observed that only g.i.d. distributions can define a Cox and
renewal process. An analogue of a Theorem in
Feller (I966, p.294) is also established combining the main ideas of this paper.

\section{Geometric Infinite Divisibility and Subordination}

\begin{thm}\label{cox-thm-1.1} 
Let $X$ be an i.d. r.v. with positive support. Let $X(t)$,
$t\geq 0$ be the associated process with stationary independent 
increments having Laplace transform $e^{-t\psi(\lambda)}$. Let $Y (t)$ be the process subordinated to $X(t)$ by the gamma operational time with distribution function
$ G_t(x) = \frac{1}{\Gamma(t)} \int\limits^x_0 y^{t-1} e^{-y} dy.$ Then the distribution of the increments of the subordinated
process is g.i.d. for $t\leq 1$.
\end{thm}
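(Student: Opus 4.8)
The plan is to pass to Laplace transforms, compute the transform of a generic increment of $Y$, and then recognise it as a g.i.d.\ transform via the Klebanov \textit{et al.}\ (1984) characterisation quoted in the Introduction. Since the gamma process $\{T_t\}$ with distribution $G_t$ is itself a L\'evy subordinator, $Y(t)=X(T_t)$ has stationary independent increments, so it is enough to treat the increment over $[0,t]$. Conditioning on the operational time and using the subordination formula (Feller (1966, p.\ 335)),
\[
E\left(e^{-\lambda Y(t)}\right)=\int_{0}^{\infty}e^{-x\psi(\lambda)}\,\frac{x^{t-1}e^{-x}}{\Gamma(t)}\,dx=\frac{1}{\bigl(1+\psi(\lambda)\bigr)^{t}},
\]
so the statement reduces to showing that $\lambda\mapsto\bigl(1+\psi(\lambda)\bigr)^{-t}$ is a g.i.d.\ Laplace transform whenever $0<t\le 1$.

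Next I would write $\bigl(1+\psi(\lambda)\bigr)^{-t}=\dfrac{1}{1+h(\lambda)}$ with $h(\lambda)=\bigl(1+\psi(\lambda)\bigr)^{t}-1$, noting that $h(0)=0$. By the (Laplace-transform form of the) Klebanov \textit{et al.}\ criterion it suffices to verify that $e^{-h(\lambda)}$ is i.d.; by the L\'evy--Khintchine representation for laws on $[0,\infty)$ this is the same as showing that $h$ is a Bernstein function (a nonnegative function with completely monotone derivative). Now $X$ is i.d.\ with positive support, so its Laplace exponent $\psi$ is already Bernstein with $\psi(0)=0$; and the elementary function $u\mapsto(1+u)^{t}-1$ is Bernstein on $[0,\infty)$ precisely because its derivative $t(1+u)^{t-1}$ is completely monotone when $t-1\le 0$. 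Since a composition of two Bernstein functions is again Bernstein, $h=\bigl(1+\psi(\,\cdot\,)\bigr)^{t}-1$ is Bernstein, hence $e^{-h}$ is i.d., and the reduction is complete. (Equivalently, and more in the spirit of this section, one may exhibit $e^{-h(\lambda)}=\int_{0}^{\infty}e^{-v\psi(\lambda)}\,\nu_{t}(dv)$ for a suitable probability measure $\nu_{t}$ obtained by exponentially tilting the positive $t$-stable law, which displays the increment as $X$ evaluated at an independent infinitely divisible time.)

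The place where the hypothesis $t\le 1$ really bites is exactly the complete-monotonicity step: for $t>1$ the function $(1+u)^{t}-1$ is convex with non-decreasing derivative, hence not Bernstein, and already on the simplest example $\psi(\lambda)=\lambda$ the increment has a gamma$(t,1)$ law, which is g.i.d.\ only for $t\le 1$. So the main obstacle is not a hidden difficulty but the need to state the monotonicity facts in the sharp form that pins the argument to the range $0<t\le 1$; a secondary routine point is the justification that $Y$ has stationary increments, which is what makes the phrase ``the distribution of the increments'' unambiguous.
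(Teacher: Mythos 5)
Your proposal is correct and follows essentially the same route as the paper: compute the Laplace transform of the increment as $(1+\psi(\lambda))^{-t}$, rewrite it as $1/(1+h(\lambda))$ with $h(\lambda)=(1+\psi(\lambda))^{t}-1$, and verify that $h$ has a completely monotone derivative with $h(0)=0$ (the paper cites Feller for the composition step and Lemma 2.1 of Pillai and Sandhya (1990) for the g.i.d.\ conclusion, which is the same criterion you invoke via Klebanov \textit{et al.}). Your added remarks on why $t\le 1$ is sharp and on stationarity of the increments are sound but not part of the paper's argument.
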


\begin{proof}
Let $F_s(x)$ be the distribution corresponding to the stochastic
process $X(s)$, $s\geq 0$ and $H_t(x)$ that of $Y(t)$, $t\geq 0$. 
By assumption 
$$
H_t(x) = \int\limits^\infty_0 
F_s(x) G_t \{ds\}.
$$
Taking Laplace transforms on both sides we get that of $H_t(x)$ as
\begin{align*}
\hat{H}_t(x) & = \int\limits^\infty_0 e^{-s\psi(\lambda)} G_t\{ds\}\\
& = \frac{1}{(1+\psi(\lambda))^t}\\
& = \frac{1}{1+(1+\psi(\lambda))^t -1}\\
& = \frac{1}{1+h(\lambda)}.
\end{align*}

Since $\psi(\lambda)$ has complete monotone derivative and
$\psi(0)=0$, $h(\lambda)$ also has complete monotone derivative 
for $t\leq 1$ and $h (0)=0$ (Feller (1966, p.417). 
Now the result follows from Lemma 2.1 of Pillai and Sandhya (1990).
\end{proof}

\begin{thm} 
Let $X$ be an i.d. r.v. with positive support. Let $X(t)$,
$t\geq 0$ be the associated process with stationary independent 
increments having Laplace transform $e^{-t\psi(\lambda)}$. Let 
$Y(t)$ be the process subordinated to $X(t)$ by the directing 
exponential operational time with distribution function 
$G_t(x)=\frac{1}{t}\int\limits^x_0$ $e^{-u/t}$ $du$. Then the distribution of the increments of the subordinated process 
is g.i.d for all $t>0$.
\end{thm}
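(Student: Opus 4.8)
The plan is to follow the proof of Theorem~2.1 almost verbatim; the only substantive change is that the exponential operational time yields a cleaner algebraic form for $\hat H_t$, and this is exactly what removes the restriction $t\le 1$. First I would set up the notation as before: let $F_s(x)$ be the distribution of $X(s)$ and $H_t(x)$ that of $Y(t)$, so that subordination gives $H_t(x)=\int_0^\infty F_s(x)\,G_t\{ds\}$. Taking Laplace transforms and using $\hat F_s(\lambda)=e^{-s\psi(\lambda)}$ together with $G_t\{ds\}=\tfrac{1}{t}e^{-s/t}\,ds$, the integral is an elementary one:
\begin{align*}
\hat H_t(\lambda) &= \int_0^\infty e^{-s\psi(\lambda)}\,\frac{1}{t}\,e^{-s/t}\,ds = \frac{1/t}{\psi(\lambda)+1/t} = \frac{1}{1+t\,\psi(\lambda)}.
\end{align*}

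Next I would put $h(\lambda)=t\,\psi(\lambda)$, so that $\hat H_t(\lambda)=\frac{1}{1+h(\lambda)}$, which is precisely the form appearing in the characterization of g.i.d.\ laws recorded in the Introduction (equivalently, the hypothesis of Lemma~2.1 of Pillai and Sandhya (1990)). To apply that lemma I must check that $e^{-h(\lambda)}=e^{-t\psi(\lambda)}$ is the Laplace transform of an infinitely divisible law on $[0,\infty)$ and that $h(0)=0$. The second is trivial since $\psi(0)=0$. For the first, note that $e^{-t\psi(\lambda)}$ is exactly the Laplace transform of $X(t)$, the value at time $t$ of a process with stationary independent increments and positive support; hence $X(t)$ is i.d.\ for every $t>0$. (Equivalently: $e^{-\psi(\lambda)}$ is i.d.\ by hypothesis, and every positive power of an i.d.\ Laplace transform supported on $[0,\infty)$ is again i.d.) The lemma then gives that $H_t$ is g.i.d., for every $t>0$.

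Finally I would highlight the contrast with Theorem~2.1: there the relevant function was $(1+\psi(\lambda))^t-1$, whose derivative is completely monotone only for $t\le 1$, whereas here $h(\lambda)=t\,\psi(\lambda)$ inherits a completely monotone derivative from $\psi$ for every $t>0$, and correspondingly $e^{-t\psi(\lambda)}$ is i.d.\ for all $t>0$; this is exactly why the conclusion now holds for all $t>0$ without restriction. I do not expect a genuine obstacle here: the computation of $\hat H_t$ is routine, and the only point deserving a word of justification is the infinite divisibility of $e^{-t\psi}$ for non-integer $t$, which is immediate from the stationary-independent-increments structure of $X(t)$.
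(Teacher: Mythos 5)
Your proposal is correct and follows exactly the route the paper intends: the paper gives no separate proof but states that this theorem ``follows along the same lines as that of Theorem 2.1,'' and your computation $\hat H_t(\lambda)=\frac{1}{1+t\psi(\lambda)}$ followed by the observation that $h(\lambda)=t\psi(\lambda)$ retains a completely monotone derivative (equivalently, that $e^{-t\psi}$ is an i.d.\ Laplace transform) for every $t>0$ is precisely that adaptation. Your closing remark correctly identifies why the restriction $t\le 1$ from the gamma case disappears here.
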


\begin{thm} 
With the above set up if the operational time has a
Mittag-Leffler distribution with Laplace transform
$\frac{1}{1+\lambda^t}$, $0<t\leq 1$,
then the distribution of the increments of the subordinated process is g.i.d.
\end{thm}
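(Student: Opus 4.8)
The plan is to follow the same route as in the proof of Theorem~\ref{cox-thm-1.1}, merely replacing the gamma operational time by the Mittag-Leffler one. Let $F_s(x)$ denote the distribution of $X(s)$, $s\geq 0$, and $H_t(x)$ that of the subordinated process $Y(t)$, $t\geq 0$. Subordination gives $H_t(x)=\int_0^\infty F_s(x)\,G_t\{ds\}$, where $G_t$ now has Laplace transform $1/(1+\lambda^t)$, $0<t\leq 1$. Taking Laplace transforms in $x$ and using $\int_0^\infty e^{-\lambda x}F_s\{dx\}=e^{-s\psi(\lambda)}$, I would compute
\[
\hat H_t(\lambda)=\int_0^\infty e^{-s\psi(\lambda)}\,G_t\{ds\}=\frac{1}{1+\psi(\lambda)^t}=\frac{1}{1+h(\lambda)},
\]
that is, the Mittag-Leffler Laplace transform evaluated at the argument $\psi(\lambda)$, with $h(\lambda):=\psi(\lambda)^t$. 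This is the exact analogue of the display in Theorem~\ref{cox-thm-1.1}, with $(1+\psi(\lambda))^t-1$ replaced by $\psi(\lambda)^t$.

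By the characterization of g.i.d.\ laws it then suffices, via Lemma~2.1 of Pillai and Sandhya (1990), to show that $e^{-h(\lambda)}$ is infinitely divisible, equivalently that $h$ has completely monotone derivative and $h(0)=0$. The normalization $h(0)=\psi(0)^t=0$ is immediate from $\psi(0)=0$. For the monotonicity, I would argue that since $e^{-t\psi(\lambda)}$ is the Laplace transform of an i.d.\ law on $[0,\infty)$, $\psi$ has completely monotone derivative (is a Bernstein function), while $u\mapsto u^t$ is a Bernstein function for $0<t\leq 1$; the composition of two such functions is again of this type (Feller (1966, p.417)), so $h=\psi^t$ has completely monotone derivative. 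Concretely one writes $h'(\lambda)=t\,\psi(\lambda)^{t-1}\psi'(\lambda)$, a product of the completely monotone $\psi'$ with $\psi^{t-1}$, the latter being completely monotone because $u\mapsto u^{-(1-t)}$ is completely monotone and is composed with the Bernstein function $\psi$; a product of completely monotone functions is completely monotone. Lemma~2.1 of Pillai and Sandhya (1990) then gives that $H_t$ is g.i.d.

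The only genuine obstacle is the step asserting that $\psi^t$ inherits the completely-monotone-derivative property for $0<t\leq1$; the rest is bookkeeping identical to Theorem~\ref{cox-thm-1.1}. This is precisely the point where, there, one needs $(1+\psi)^t-1$ to have completely monotone derivative, and it is settled by the same closure properties of Bernstein and completely monotone functions under composition and products. (For $t=1$ the statement is trivial, since then $\hat H_1(\lambda)=1/(1+\psi(\lambda))$ and g.i.d.\ follows directly from the Klebanov \textit{et al.}\ (1984) characterization recalled in the introduction.)
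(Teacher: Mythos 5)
Your proposal is correct and follows exactly the route the paper intends: the paper simply states that the proof ``follows along the same lines as that of Theorem~2.1,'' and your computation $\hat H_t(\lambda)=1/(1+\psi(\lambda)^t)$ together with the verification that $h=\psi^t$ has completely monotone derivative and $h(0)=0$, followed by Lemma~2.1 of Pillai and Sandhya (1990), is precisely that argument with the details filled in. Your closure argument for $\psi^t$ (composition of the Bernstein functions $u\mapsto u^t$ and $\psi$, or equivalently the product decomposition of $h'$) is sound and is in fact more explicit than anything the paper provides.
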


The proof of the above two theorems follow along the same
lines as that of Theorem 2.1.

\section{$p$-thinning of Renewal Processes}
Let $X_1,X_2, \ldots $ denote a sequence of \textit{i.i.d. r.v}s with distribution function $F$. Then $S_n=X_1 + \ldots +X_n$, $n=1,2, \ldots $
denotes a renewal process. Here $X_1,X_2,\ldots$ are the inter 
arrival times of the renewal process. Then $X\overset{d}{=}$ 
$\sum\limits^{N_p}_{j=1}$ $X_j^{(p)}$ 
denotes the inter arrival time of the corresponding $p$-thinned 
process with thinning probability $q= (1-p)$. The inter arrival 
time distribution $G$ of $X$ is a geometric convolution of
$F$. i.e.,
$$
G(x)  = \sum^\infty_{n=1} pq^{n-1} F^{n*}(x),
$$
where $F^{n*}$ is the $n$-fold convolution of $F$.

Taking Laplace transforms on both sides of the above
equation we have
$$
g(\lambda) = \frac{p\phi_p(\lambda)}{1-q\phi_p(\lambda)}
$$
or
$$
\phi_p(\lambda) = \frac{g(\lambda)}{p+qg(\lambda)}.
$$
Thus a Laplace transform $g(\lambda)$ corresponds to a thinned 
renewal process if and only if there exists a Laplace transform
$\phi_p(\lambda)$ such that
$$
g(\lambda) = \frac{p\phi_p(\lambda)}{1-q\phi_p(\lambda)}
$$
or if
$$
\phi_p(\lambda) = \frac{g(\lambda)}{p+q(\lambda)}
$$
is a Laplace transform.

\begin{dfn} 
A distribution with positive support is said to be 
semi-Mittag-Leffler of exponent $\alpha$, $0<\alpha\leq 1$, if its 
Laplace transform is of the form $\frac{1}{1+\psi(\lambda)}$,
where $\psi(\lambda)$ satisfies $\psi(\lambda)=a\psi(b\lambda)$,
$0<b<a$ and $\alpha$ is the unique solution of $ab^{\alpha}$ $=1$.
\end{dfn}

\begin{thm} 
The semi-Mittag-Leffler distribution is the only inter arrival
time distribution such that the corresponding renewal process 
is invariant under $p$-thinning (up to a scale change, 
$0<c<1$).
\end{thm}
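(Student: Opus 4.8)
\medskip
\noindent\emph{Plan of proof.}\quad
The plan is to convert the phrase ``invariant under $p$-thinning up to a scale change'' into a single functional equation for the Laplace transform of the inter arrival law, and then to recognise that equation as precisely the one defining the semi-Mittag-Leffler distribution. Let $\phi(\lambda)$ be the Laplace transform of the inter arrival time. As recorded at the start of this section, passing to the $p$-thinned renewal process replaces $\phi$ by $\dfrac{p\phi}{1-q\phi}$ (with $q=1-p$), while a contraction by a factor $c\in(0,1)$ replaces the argument $\lambda$ by $c\lambda$; these two operations commute at the level of Laplace transforms. Hence the renewal process is invariant under $p$-thinning up to the scale change $c$ exactly when
$$
\phi(\lambda)=\frac{p\,\phi(c\lambda)}{1-q\,\phi(c\lambda)} .
$$

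Next I would linearise this equation through the substitution $\psi(\lambda)=\dfrac{1}{\phi(\lambda)}-1$, so that $\phi=\dfrac{1}{1+\psi}$, $\psi\ge 0$ and $\psi(0)=0$. Inserting $\phi(c\lambda)=\dfrac{1}{1+\psi(c\lambda)}$ into the right-hand side and clearing denominators (using $1-q=p$) collapses it to $\dfrac{p}{p+\psi(c\lambda)}$; comparing with $\dfrac{1}{1+\psi(\lambda)}$ then gives at once
$$
\psi(c\lambda)=p\,\psi(\lambda),
$$
that is, $\psi(\lambda)=a\,\psi(b\lambda)$ with $a=1/p>1$ and $b=c\in(0,1)$, so that $0<b<a$. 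Taking $\alpha$ to be the unique root of $ab^{\alpha}=1$, i.e.\ $\alpha=\log(1/p)/\log(1/c)$, the identity $\phi=\dfrac{1}{1+\psi}$ with $\psi(\lambda)=a\psi(b\lambda)$ is exactly the statement of Definition 3.1 that the inter arrival law is semi-Mittag-Leffler of exponent $\alpha$. For the converse I would start from a semi-Mittag-Leffler law with its relation $\psi(\lambda)=a\psi(b\lambda)$, put $p=1/a\in(0,1)$ and $c=b\in(0,1)$, and run the computation in reverse to recover $\phi(\lambda)=\dfrac{p\phi(c\lambda)}{1-q\phi(c\lambda)}$, i.e.\ the invariance; thus the characterisation is an equivalence.

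I do not expect a serious obstacle in the algebra, which is routine; the real content is the recognition that the invariance relation is the semi-Mittag-Leffler equation in disguise. Two points deserve a careful word. First, that the exponent produced, $\alpha=\log(1/p)/\log(1/c)$, lies in $(0,1]$: since the contraction must compensate for the lengthening of inter arrival times caused by thinning one has $c\le p$ (with $c=p$, hence $\alpha=1$, in the exponential/Poisson case, which recovers Renyi's theorem), and $c\le p$ forces $\alpha\le 1$. Second, that the function $\psi=1/\phi-1$ obtained from a genuine Laplace transform really does define a semi-Mittag-Leffler distribution in the sense of Definition 3.1; but this is immediate, since $\phi$ is a Laplace transform by hypothesis, and the monotonicity and infinite-divisibility properties of $e^{-\psi}$ that make the definition meaningful are those already established in Pillai (1985) and Pillai and Sandhya (1990).
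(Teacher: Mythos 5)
Your proposal is correct and follows essentially the same route as the paper: reduce the invariance to the functional equation $\phi(\lambda)=\frac{p\phi(c\lambda)}{1-q\phi(c\lambda)}$, substitute $\psi=1/\phi-1$ to obtain $\psi(\lambda)=\frac{1}{p}\psi(c\lambda)$, and identify this with Definition 3.1 via $c^{\alpha}=p$, with the converse by retracing the steps. The extra remarks on why $\alpha\le 1$ and on the legitimacy of $\psi$ are welcome additions not present in the paper's own (terser) argument.
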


\begin{proof}
Let $\phi(\lambda)$ be the Laplace transform of the inter arrival 
time distribution of some renewal process. Applying $p$-thinning 
and allowing a scale change $c, 0<c<1$, we get
\begin{align*}
g(c\lambda) & = \phi(\lambda), \text{ where }
g(\lambda) = \frac{p\phi(\lambda)}{1-q\phi(\lambda)}\\
\text{i.e.,}\quad
\phi(\lambda) & = \frac{p\phi(c\lambda)}{1-q\phi(c\lambda)},\;
0<c<1.
\end{align*}
Let $\phi(\lambda)=\frac{1}{1+\psi(\lambda)}$, where 
$\psi(\lambda)=\frac{1}{\phi(\lambda)}-1$ and then $\psi(\lambda)$
satisfies
\begin{equation}\label{cox-eq-2.1}
\psi(\lambda) = \frac{1}{p}\psi(c\lambda).
\end{equation}
This implies that the distribution is semi-Mittag-Leffler of exponent $\alpha$, $0<\alpha \leq 1$, choosing $c^{\alpha}=p$.

The converse follows by choosing $c^{\alpha}=p$ and retracing the steps.
\end{proof}

\begin{cor} 
If for two values of `$p$', say $p_1$ and $p_2$, 
$\frac{\log p_1}{\log p_2}$ is irrational, and (3.1) 
is satisfied, then $\psi(\lambda)=A\lambda^{\alpha}$, $0<\alpha\leq 1$, $A>0$ a constant, which implies that the distribution is Mittag-Leffler of exponent $\alpha$.
\end{cor}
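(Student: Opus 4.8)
The plan is to start from equation (3.1), namely $\psi(\lambda) = \frac{1}{p}\,\psi(c\lambda)$, applied for the two thinning probabilities $p_1$ and $p_2$ with their respective scale factors. Writing $c_i^{\alpha} = p_i$ as in the proof of the theorem (equivalently $p_i = c_i^{\alpha}$), the functional equation becomes $\psi(c_i\lambda) = c_i^{\alpha}\,\psi(\lambda)$ for $i=1,2$. Hence $g(\lambda) := \lambda^{-\alpha}\psi(\lambda)$ satisfies $g(c_1\lambda) = g(\lambda)$ and $g(c_2\lambda) = g(\lambda)$; that is, $g$ is periodic in the logarithmic variable $u = \log\lambda$ with the two periods $-\log c_1$ and $-\log c_2$. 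The irrationality hypothesis on $\frac{\log p_1}{\log p_2}$ — which, since $\log p_i = \alpha \log c_i$, is the same as $\frac{\log c_1}{\log c_2}$ being irrational — means the additive group generated by these two periods is dense in $\mathbb{R}$.

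First I would make the change of variables $u = \log\lambda$ and set $G(u) = g(e^u) = e^{-\alpha u}\psi(e^u)$, recording that $G$ is continuous (indeed $\psi$, being $1/\phi - 1$ for a Laplace transform $\phi$, is continuous on $(0,\infty)$) and that $G(u - \log c_1) = G(u)$, $G(u - \log c_2) = G(u)$. Next I would invoke the standard fact that a continuous function on $\mathbb{R}$ admitting two incommensurable periods is constant: the period group is a dense subgroup of $\mathbb{R}$, so $G$ agrees with a constant on a dense set, and continuity forces $G \equiv A$ for some constant $A$. Unwinding the substitution gives $\psi(\lambda) = A\lambda^{\alpha}$ for all $\lambda > 0$. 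Then $A > 0$ follows because $\psi(\lambda) = 1/\phi(\lambda) - 1 > 0$ for $\lambda > 0$ (as $\phi$ is a Laplace transform of a distribution with positive support, $0 < \phi(\lambda) < 1$), and $0 < \alpha \le 1$ is inherited from the semi-Mittag-Leffler structure established in the theorem. Finally, $\phi(\lambda) = \frac{1}{1+A\lambda^{\alpha}}$ is, after the trivial rescaling absorbing $A$, exactly the Mittag-Leffler Laplace transform $\frac{1}{1+\lambda^{\alpha}}$, so the inter arrival distribution is Mittag-Leffler of exponent $\alpha$.

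The main obstacle is the passage from "two incommensurable periods" to "constant." It is routine once one has continuity of $\psi$ in hand, but it genuinely uses that regularity: without it, pathological (non-measurable) solutions of the two-period functional equation exist. So the one point I would be careful to state explicitly is why $\psi$ — hence $G$ — is continuous on $(0,\infty)$, which is immediate here since $\phi$ is the Laplace transform of a genuine probability distribution and therefore continuous and strictly positive on $(0,\infty)$, making $\psi = 1/\phi - 1$ continuous there. Everything else is the elementary density-of-subgroups-of-$\mathbb{R}$ argument plus bookkeeping to match the Mittag-Leffler normalization.
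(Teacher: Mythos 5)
The paper does not actually prove this corollary; it simply cites the lemma in Kagan, Linnik and Rao (1973, p.~324), and the density-plus-continuity argument you give is exactly the standard proof of that lemma. So your overall route is the right one, and the continuity point you single out (via $\psi=1/\phi-1$ with $\phi$ a Laplace transform) is correctly identified as the regularity that makes the two-incommensurable-periods argument work.

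There is, however, one genuine gap at the very first step: you write $c_i^{\alpha}=p_i$ for $i=1,2$ \emph{with the same} $\alpha$, and you then use this both to make $g(\lambda)=\lambda^{-\alpha}\psi(\lambda)$ invariant under \emph{both} scalings and to convert the hypothesis ``$\log p_1/\log p_2$ irrational'' into ``$\log c_1/\log c_2$ irrational.'' But a priori each pair $(p_i,c_i)$ defines its own exponent $\alpha_i=\log p_i/\log c_i$, and nothing in the statement guarantees $\alpha_1=\alpha_2$; without that, your $g$ satisfies $g(c_1\lambda)=g(\lambda)$ but only $g(c_2\lambda)=\kappa\, g(\lambda)$ with $\kappa=p_2c_2^{-\alpha_1}$, and the two-period argument does not apply. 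The gap is closable by the same density idea: iterating gives $\psi(c_1^mc_2^n)=p_1^mp_2^n\,\psi(1)$, and since $\psi$ is positive, increasing and continuous on $(0,\infty)$, the monotone correspondence $m\log c_1+n\log c_2\mapsto m\log p_1+n\log p_2$ between the two subgroups of $\mathbb{R}$ forces $\log p_i=\alpha\log c_i$ for a single $\alpha>0$ (any $\kappa\neq 1$ would let you drive $\kappa^n\psi(\lambda)$ to $0$ or $\infty$ along a sequence $m\log c_1+n\log c_2\to 0$, contradicting $\psi(\lambda)\in(0,\infty)$). Once that is established, the rest of your argument --- the change of variables, the dense period group, $A>0$, and the rescaling to the standard Mittag-Leffler form --- goes through as written.
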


Proof follows from Kagan, Linnik and Rao (1973, p.324). 

\section{Cox and Renewal Processes}
An ordinary renewal process with inter arrival time distribution 
$G$ is said to be a Cox process if there exists a process with 
inter arrival time distribution $F_p$ such that $G$ is the 
$p$-thinned process of $F_p$, for all $p\in (0,1)$. That is, $F_p$, 
is the $p$-inverse of $G$ for all $p\in (0,1)$. 
Yannaros (1989) proved that if a renewal process $N$ is a Cox 
process, than it cannot be the thinning of some non-renewal process,
and all the possible original processes are $p$-thinnings of other
renewal processes for every thinning parameter $p\in (0,1)$ and this
properly characterises the processes which are both Cox and renewal.

Stated in the form of Laplace transforms $g(\lambda)$ corresponds to
a Cox and renewal process if and only if 
$\phi_p(\lambda)=\frac{g(\lambda)}{p+qg(\lambda)}$
is a Laplace transform for all $p\in (0,1)$, as $\phi_p(\lambda)$ 
need not be a Laplace transform always. Yannaros (1988) 
has proved that this is possible if and only if 
$g(\lambda)=\frac{1}{1+\psi(\lambda)}$, $\psi(0)=0$ and
$\psi(\lambda)$ has a complete monotone derivative i.e., 
when $g(\lambda)$ is g.i.d. Therefore, we have,

\begin{thm}\label{cox-thm-3.1} 
An ordinary renewal process defines a Cox process if and only
if its inter arrival time distribution is g.i.d.
\end{thm}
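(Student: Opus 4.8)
The plan is to argue entirely through Laplace transforms, reducing the theorem to the characterization already recorded in the paragraph preceding the statement. Writing $g(\lambda)$ for the Laplace transform of the inter arrival distribution $G$ of the given renewal process $N$, the Section~3 analysis of $p$-thinning shows that a renewal process with transform $\phi_p(\lambda)$ has $N$ as its $p$-thinning exactly when $\phi_p(\lambda)=g(\lambda)/(p+qg(\lambda))$; hence $N$ has a renewal $p$-inverse iff this expression is itself the Laplace transform of a probability law on $[0,\infty)$, and --- by Yannaros (1989) --- $N$ is a Cox process iff it has such a renewal $p$-inverse for every $p\in(0,1)$. So I would restate the goal as: $\phi_p(\lambda)=g(\lambda)/(p+qg(\lambda))$ is a Laplace transform for all $p\in(0,1)$ if and only if $g$ is g.i.d.

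For the forward implication I would assume $N$ is Cox, so $\phi_p$ is a Laplace transform for every $p$, and then quote Yannaros (1988) to get $g(\lambda)=1/(1+\psi(\lambda))$ with $\psi(0)=0$ and $\psi$ having a completely monotone derivative. I would then note that, by the Bernstein-type characterization of subordinators, this is exactly the condition that $e^{-\psi(\lambda)}$ be the Laplace transform of an infinitely divisible law on $[0,\infty)$, which by the (Laplace-transform form of the) Klebanov \textit{et al.} (1984) criterion recalled in the Introduction means precisely that $g$ is g.i.d.

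For the converse I would start from a g.i.d.\ $G$, so $g(\lambda)=1/(1+\psi(\lambda))$ with $\psi(0)=0$ and $\psi'$ completely monotone, and compute, using $p+q=1$,
\[
p+qg(\lambda)=\frac{p(1+\psi(\lambda))+q}{1+\psi(\lambda)}=\frac{1+p\,\psi(\lambda)}{1+\psi(\lambda)},
\]
so that $\phi_p(\lambda)=g(\lambda)/(p+qg(\lambda))=1/(1+p\,\psi(\lambda))$. Since $p\,\psi$ still vanishes at $0$ and still has a completely monotone derivative, $\phi_p$ is again of g.i.d.\ form, in particular a genuine Laplace transform of a probability law on $[0,\infty)$, for every $p$. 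Hence $N$ admits a renewal $p$-inverse for all $p$ and is therefore a Cox (and renewal) process.

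The only step that is not routine bookkeeping is the reduction in the first paragraph: that ``$N$ is a Cox process'' is equivalent to the purely analytic demand that $g/(p+qg)$ be a Laplace transform for all $p\in(0,1)$, together with the fact that the resulting $p$-inverse is automatically a renewal process. I expect this --- exactly what I would import from Yannaros (1988, 1989) --- to be the crux; once it is granted, the forward direction is a citation and the converse is the one-line simplification $\phi_p=1/(1+p\psi)$ plus the closure of the class $\{\psi:\psi(0)=0,\ \psi'\text{ completely monotone}\}$ under multiplication by positive constants.
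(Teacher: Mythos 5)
Your proposal is correct and follows essentially the same route as the paper: reduce ``Cox'' to the analytic condition that $g/(p+qg)$ be a Laplace transform for every $p\in(0,1)$ via Yannaros (1989), and then invoke Yannaros (1988) together with the Klebanov \textit{et al.} characterization of g.i.d.\ laws. The only difference is cosmetic --- you work out the easy direction explicitly as $\phi_p=1/(1+p\psi)$, whereas the paper simply cites Yannaros (1988) for the full equivalence.
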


Now we examine the behaviour of a $p$-thinned renewal process
with $p=1/n$ and $\phi_p(\lambda)=e^{-(1/n)\psi(\lambda)}$ as 
$n\rightarrow\infty$.

\begin{thm}\label{cox-thm-3.2} 
The $(1/n)$-thinning of an ordinary renewal process whose
inter arrival time distribution is i.d. with Laplace transform
$e^{-(1/n)\psi}$, as $n\rightarrow\infty$ defines a Cox and 
renewal process.
\end{thm}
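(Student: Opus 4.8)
The plan is to compute the Laplace transform of the $(1/n)$-thinned inter arrival time distribution in closed form, pass to the limit as $n\to\infty$, recognise the limit as a g.i.d.\ Laplace transform, and then invoke Theorem \ref{cox-thm-3.1}.

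First I would use the relation derived at the start of Section 3: a renewal process with inter arrival time Laplace transform $\phi_p(\lambda)$ has, after $p$-thinning, inter arrival time Laplace transform $g(\lambda)=\dfrac{p\phi_p(\lambda)}{1-q\phi_p(\lambda)}$ with $q=1-p$. Taking $p=1/n$ and $\phi_p(\lambda)=e^{-(1/n)\psi(\lambda)}$, the $(1/n)$-thinned process has inter arrival time Laplace transform
$$
g_n(\lambda)=\frac{(1/n)\,e^{-(1/n)\psi(\lambda)}}{1-(1-1/n)\,e^{-(1/n)\psi(\lambda)}}
=\frac{e^{-(1/n)\psi(\lambda)}}{n\bigl(1-e^{-(1/n)\psi(\lambda)}\bigr)+e^{-(1/n)\psi(\lambda)}},
$$
and each $g_n$ is a genuine Laplace transform, being the geometric convolution $\sum_{k\ge1}pq^{k-1}F_p^{k*}$ of the i.d.\ distribution with transform $e^{-(1/n)\psi}$.

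Next I would fix $\lambda\ge0$ and let $n\to\infty$. Since $\psi(0)=0$ and $\psi$ is continuous, $e^{-(1/n)\psi(\lambda)}\to1$ and $n\bigl(1-e^{-(1/n)\psi(\lambda)}\bigr)\to\psi(\lambda)$, whence
$$
g_n(\lambda)\;\longrightarrow\;\frac{1}{1+\psi(\lambda)}=:g(\lambda),\qquad \lambda\ge0 .
$$
The limit $g$ is continuous at $\lambda=0$ with $g(0)=1$, so by the continuity theorem for Laplace transforms (Feller (1966)) it is the Laplace transform of a proper distribution $G$ with positive support, and the $(1/n)$-thinned renewal processes converge, in the sense of their inter arrival time distributions and hence of their finite-dimensional distributions, to the ordinary renewal process generated by $G$.

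Finally, because $e^{-\psi(\lambda)}$ is infinitely divisible, the characterisation of Klebanov \textit{et al.} (1984) recalled in the Introduction shows that $g(\lambda)=1/(1+\psi(\lambda))$ is the Laplace transform of a g.i.d.\ distribution; equivalently $\psi(0)=0$ and $\psi$ has a completely monotone derivative, which is precisely Yannaros's condition quoted in Section 3. Theorem \ref{cox-thm-3.1} then shows that the renewal process with inter arrival time distribution $G$ defines a Cox process, and it is a renewal process by construction, which is the assertion. The only point requiring care is the passage to the limit: rather than tracking $o(1/n)$ error terms, I would rely on pointwise convergence of $g_n$ together with the continuity theorem, which simultaneously legitimises the limiting distribution and delivers the convergence of the processes; the single analytic input $n\bigl(1-e^{-(1/n)\psi}\bigr)\to\psi$ is immediate from the derivative of $t\mapsto e^{-t\psi}$ at $t=0$.
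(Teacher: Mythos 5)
Your proposal is correct and follows essentially the same route as the paper: write the Laplace transform of the $(1/n)$-thinned process as $\frac{(1/n)e^{-(1/n)\psi}}{1-(1-1/n)e^{-(1/n)\psi}}$, let $n\to\infty$ to obtain $\frac{1}{1+\psi(\lambda)}$, recognise this as g.i.d., and conclude via Theorem \ref{cox-thm-3.1}. The extra care you take with the continuity theorem and the limit $n\bigl(1-e^{-(1/n)\psi}\bigr)\to\psi$ merely fills in details the paper leaves implicit.
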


\begin{proof}
The Laplace transform of the $(1/n)$ thinned process is given by
$$
\frac{(1/n) e^{-(1/n)\psi}}{1-(1-1/n) e^{-(1/n)\psi}}.
$$

Taking the limit as $n\rightarrow\infty$, the Laplace transform
reduces to $\frac{1}{1+\psi(\lambda)}$, which is g.i.d. Hence the
result.
\end{proof}

\begin{rem} 
We saw that Laplace transforms of the form 
$\frac{1}{1+t\psi(\lambda)}$ for $t\geq 0$,
$\frac{1}{(1+\psi(\lambda))^t}$ and
$\frac{1}{1+\psi^t(\lambda)}$ for $t\leq 1$ are g.i.d.
This means that they can serve as the inter arrival times of Cox and 
renewal processes. Or in other words subordination of i.d. 
processes (with positive support) with exponential, gamma and 
Mittag-Leffler operational times generates Cox and renewal processes.
\end{rem}

An analogue of a theorem in Feller (1966, p.294) can be
proved combining the main ideas.

\begin{thm} 
The following classes of probability distribution are identical.

\begin{enumerate}[(i)]
\item 
The set of all g.i.d. distributions with positive support and the limits of such distributions.
\item 
Distributions with positive support whose characteristic function is
$$
g(t)=\lim_{n\rightarrow\infty}
\frac{1}{1+\sum\limits^{k_n}_{k=1}\lambda_{n,k}\left(1- e^{i\beta_{n,k}t}\right)}, \lambda_n>0
$$
\item 
Limit distributions of $\sum\limits^{N_n}_{j=1}X_j$ where $N_n$ 
follows a geometric distribution with mean $n$, independent of 
$X_j$'s and $X_j$'s are uniformly asymptotically negligible \textit{r.v}s with a common distribution 
function having positive support and Laplace transform
$e^{-(1/n)\psi}$. 
\item 
The set $T=\{F:F$ is the inter arrival time distribution of a
Cox and renewal process$\}$.
\end{enumerate}
\end{thm}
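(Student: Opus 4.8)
The plan is to prove the four-way identity by a short cycle of inclusions, treating as black boxes the representation of Klebanov \textit{et al.} (a distribution with positive support and Laplace transform $g$ is g.i.d.\ iff $g=1/(1+\psi)$ with $e^{-\psi}$ i.d.), Theorem~\ref{cox-thm-3.1}, Theorem~\ref{cox-thm-3.2}, and Feller's triangular-array theorem for i.d.\ laws on $[0,\infty)$ (Feller (1966, p.\ 294)). Throughout I use that for a law on $[0,\infty)$ the characteristic function and the Laplace transform are two faces of one analytic object, so I may pass between them freely. Concretely I would establish $\mathrm{(i)}\Leftrightarrow\mathrm{(ii)}$, $\mathrm{(i)}\Leftrightarrow\mathrm{(iv)}$, and $\mathrm{(i)}\subseteq\mathrm{(iii)}\subseteq\mathrm{(iv)}$; assembling these gives equality of all four.

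For $\mathrm{(i)}\Leftrightarrow\mathrm{(iv)}$: by Yannaros (1988) together with Theorem~\ref{cox-thm-3.1} the set $T$ of $\mathrm{(iv)}$ is exactly the class of g.i.d.\ distributions with positive support, so it suffices to check that this class is weakly closed, i.e.\ that adjoining ``limits of such distributions'' in $\mathrm{(i)}$ changes nothing. If $g_n=1/(1+\psi_n)$ are g.i.d.\ characteristic functions and $g_n\to g$ pointwise, then $|e^{-\psi_n}|\le1$ forces $\mathrm{Re}\,\psi_n\ge0$, hence $|1+\psi_n|\ge1$ and $g$ does not vanish near the origin; there $\psi_n=1/g_n-1\to1/g-1=:\psi$, so $e^{-\psi_n}\to e^{-\psi}$, and as i.d.\ laws on $[0,\infty)$ are weakly closed $e^{-\psi}$ is i.d.; analyticity then propagates $g=1/(1+\psi)$ to all arguments, so $g$ is g.i.d. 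For $\mathrm{(i)}\Leftrightarrow\mathrm{(ii)}$: the denominator $1+\sum_{k=1}^{k_n}\lambda_{n,k}(1-e^{i\beta_{n,k}t})$ is $1+\psi_n(t)$ with $\psi_n$ the exponent of a compound Poisson law with positive jumps, and Feller (1966, p.\ 294) states precisely that $\psi$ is the L\'evy exponent of an i.d.\ law on $[0,\infty)$ (equivalently $e^{-\psi}$ i.d.)\ iff $\psi=\lim\psi_n$ for such arrays. Passing the limit through the map $z\mapsto1/(1+z)$, continuous on $\{\mathrm{Re}\,z\ge0\}$ and applicable since $\mathrm{Re}\,\psi_n\ge0$, turns this into: $g$ is g.i.d.\ iff $g=\lim1/(1+\psi_n)$ for such arrays; the weak closure just established absorbs the extra ``limits of such'' in $\mathrm{(i)}$.

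For $\mathrm{(i)}\subseteq\mathrm{(iii)}\subseteq\mathrm{(iv)}$: given a g.i.d.\ law with Laplace transform $g=1/(1+\psi)$, let $X_1^{(n)},X_2^{(n)},\dots$ be i.i.d.\ with Laplace transform $e^{-(1/n)\psi}$, which is legitimate because $e^{-\psi}$ i.d.\ makes $e^{-(1/n)\psi}$ completely monotone, and the array is u.a.n.\ since $e^{-(1/n)\psi(\lambda)}\to1$ uniformly on compacts. With $N_n$ geometric of mean $n$, independent of the $X_j^{(n)}$, the geometric-sum formula gives $\sum_{j=1}^{N_n}X_j^{(n)}$ the Laplace transform $\dfrac{(1/n)e^{-(1/n)\psi}}{1-(1-1/n)e^{-(1/n)\psi}}$, which, exactly as computed in the proof of Theorem~\ref{cox-thm-3.2}, converges to $1/(1+\psi)=g$; hence $\mathrm{(i)}\subseteq\mathrm{(iii)}$. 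Conversely, a class-$\mathrm{(iii)}$ law is by its very definition such a limit, and Theorem~\ref{cox-thm-3.2} asserts that this limit is the inter arrival time distribution of a Cox and renewal process; so $\mathrm{(iii)}\subseteq\mathrm{(iv)}$. Combining: $\mathrm{(i)}=\mathrm{(ii)}=\mathrm{(iv)}$ and $\mathrm{(i)}\subseteq\mathrm{(iii)}\subseteq\mathrm{(iv)}$, so the four classes coincide.

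The step I expect to be the main obstacle is not any individual implication --- each follows quickly from the cited results --- but the uniform handling of the weak limits that tie the classes together: verifying that $g\mapsto1/g-1$ and its inverse are genuinely continuous on the relevant domains, that the bad set (zeros of $g$) is avoided precisely because $\mathrm{Re}\,\psi\ge0$, that limits of g.i.d.\ (and of class-$\mathrm{(iii)}$) laws cannot leak to a defective distribution, and that the u.a.n.\ hypothesis in $\mathrm{(iii)}$ is simultaneously necessary and automatic in the construction. This bookkeeping is exactly what makes ``and the limits of such distributions'' in $\mathrm{(i)}$ align with the genuinely limiting descriptions $\mathrm{(ii)}$--$\mathrm{(iv)}$, mirroring the role of that phrase in Feller's original theorem.
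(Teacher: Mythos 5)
Your proposal is correct and follows essentially the same route as the paper: the paper's proof is a four-line list of citations (Klebanov \textit{et al.} for (i), Laha--Rohatgi/Feller's triangular-array theorem plus the i.d.--g.i.d.\ correspondence for (ii), Theorem~\ref{cox-thm-3.2} for (iii), and Theorem~\ref{cox-thm-3.1} for (iv)), and you invoke exactly these ingredients, merely supplying the connecting arguments --- the weak closedness of the g.i.d.\ class and the continuity of $z\mapsto 1/(1+z)$ --- that the paper leaves implicit.
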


\begin{proof}\mbox{}

\begin{enumerate}[(i)]
\item 
Follows from Klebanov \textit{et al.} (1984).
\item 
Follows from Laha and Rohatgi(1979, p.237) and from the 
connection between i.d. and g.i.d. in terms of characteristic function 
(Klebanov \textit{et al.} (1984).
\item 
Follows from Theorem \ref{cox-thm-3.2}.
\item 
Follows from Theorem \ref{cox-thm-3.1}. \qedhere
\end{enumerate}
\end{proof}

\subsection*{Acknowledgement}
The author is indebted to Professor R. N. Pillai, University of
Kerala for his valuable guidance, corrections and suggestions. The
financial assistance from C.S.I.R, India, in the form of a Senior
Research Fellowship is also acknowledged, with thanks.

\subsection*{References}

Feller W (1966): \textit{An Introduction to Probability Theory and
Its Applications}, Vol. II, Wiley Eastern Limited, Calcutta.

Kagan A M; Linnik Yu V and Rao C R (1973):
\textit{Characterization Problems in Mathematical Statistics}, 
John Wiley and Sons, New York.

Klebanov L B; Maniya G M and Melamed I A (l984):
A Problem of Zolotarev and Analog of Infinitely Divisible and 
Stable Distributions in a Scheme for Summing a Random
Number of Random Variables, \textit{Theory of Probability and Its
Applications}, 29, 4, 757--760.

Laha R G and Rohatgi V K (1979): \textit{Probability Theory}, 
John Wiley and Sons, New York.

Pillai R N (1985): Semi-$a$-Laplace Distributions, Communications 
in statistics, \textit{Theory and Methods}, l4, 4, 991--1000.

Pillai R N (1990): On Mittag-Leffler Functions and Related
Distributions, \textit{Annals of the Institute of Statistical 
Mathematics}, 42, 157--161.

Pillai R N and Sandhya E (1990): On Geometric Infinite
Divisibility (To appear).

Renyi A (1956): \textit{A characterization of the Poisson 
Process}, Collected papers of Alfred Renyi, Vo1.1, 
Academic Press, New York.

Yannaros N (1988): On Cox Processes and Gamma Renewal 
Processes, \textit{Journal of Applied Probability}, 
25, 423--427.

Yannaros N (1989): On Cox and Renewal Processes, 
\textit{Statistics and Probability Letters}, 7, 431--433.

\end{document}